\documentclass[11pt,leqno]{article}
\usepackage{amsmath,amssymb,amsthm,latexsym}
\usepackage{indentfirst}
 \setlength{\parindent}{12pt}                
\setlength{\parskip}{3pt plus1pt minus2pt}  
\setlength{\baselineskip}{5pt minus2pt}
\setlength{\textheight}{21true cm}      
\setlength{\textwidth}{14.5true cm}     

\usepackage{amsmath}
\newtheorem{Definition}{\bf \large Definition}[section]
\newtheorem{Theorem}{\bf \large Theorem}[section]
\newtheorem{PROPOSITION}{\bf \large Proposition}[section]

\newtheorem{Remark}{\bf \large Remark}[section]


\title{\textbf{Wintgen ideal submanifolds with a low-dimensional integrable distribution (I)}}
\author{Tongzhu Li, Xiang Ma, Changping Wang}
\date{}

\begin{document}
\maketitle

\footnotetext{T. Li, X. Ma, C.P. Wang are partially supported
by the grant No. 11171004 of NSFC; X. Ma is partially supported by the grant No. 10901006 of NSFC.}

\begin{abstract}
A submanifold in space forms satisfies the well-known DDVV inequality due to De Smet, Dillen, Verstraelen and Vrancken. The submanifold attaining equality in the DDVV inequality at every point is called Wintgen ideal submanifold.
As conformal invariant objects, Wintgen ideal submanifolds are studied in this paper using the framework of M\"{o}bius geometry. We classify Wintgen ideal submanfiolds of dimension $m>2$ and arbitrary codimension
when a canonically defined $2$-dimensional distribution $\mathbb{D}$ is integrable. Such examples come from cones, cylinders, or rotational submanifolds over super-minimal surfaces in spheres, Euclidean spaces,
or hyperbolic spaces, respectively.
\end{abstract}
\medskip\noindent
{\bf 2000 Mathematics Subject Classification:} 53A30,53A55;
\par\noindent {\bf Key words:} Wintgen ideal submanifolds, DDVV inequality, super-conformal surfaces, super-minimal surfaces.

\vskip 1 cm
\section{Introduction}
A basic idea in submanifold theory is to find certain universal
inequalities (pointwise or global ones) between various invariants (intrinsic and extrinsic), then characterize and
classify the optimal submanifolds attaining equality in such inequalities.

The simplest example of such \emph{pointwise} inequalities is
that for mean curvature $H$ and Gaussian curvature $K$ of a
surface in $\mathbb{R}^3$, there is always $H^2\ge K$, and the
equality holds true exactly at those umbilic points. This was generalized by Chen \cite{ch96} to other space forms and to arbitrary codimensional case.
(Another universal inequality posed by Chen \cite{ch93} motivated a series of investigation on submanifolds attaining the equality in Chen's inequality.)

De Smet, Dillen, Verstraelen and Vrancken
proposed in 1999 a strengthened inequality \cite{DDVV1}
involving the scalar, mean curvature and the norm of th normal curvature tensor as below.
Let $f:M^m\longrightarrow \mathbb{Q}^{m+p}_c$ be an isometric immersion of an $m-$dimensional Riemannian manifold into a space form of dimension $m+p$
and constant sectional curvature $c$. Let $R$ (resp. $R^{\bot}$) be the Riemannian curvature tensor (resp. the normal curvature tensor) of $f$. Their conjectured that at any point,
\begin{equation}\label{ineq}
DDVV ~~inequality:~~~~~~~~~~~s\leq c+||H||^2-s^{\bot}.
\end{equation}
Here $H$ denotes the mean curvature of $f$, and
\[
s=\frac{2}{m(m-1)}\sum_{1\leq i<j\leq n}\langle R(e_i,e_j)e_j,e_i\rangle ,~~~
s^{\bot}=\frac{2}{m(m-1)}||R^{\perp}||.
\]
This so-called DDVV conjecture was proved in 2008 by J. Ge, Z. Tang \cite{Ge} and Z. Lu \cite{Lu1} independently.

Moreover, in \cite{Ge} the pointwise structure of the second fundamental form of $f$ that attains equality was determined. It was shown that equality holds at
$x\in M^m$ if, and only if, there exists an orthonormal basis $\{e_1,\cdots,e_m\}$ in the tangent space $T_xM^m$ and an orthonormal basis $\{n_1,\cdots,n_p\}$ in the normal space $T_x^{\bot}M^m$,
such that the shape operators $\{A_{n_i},i=1,\cdots,m\}$ have the form
\begin{equation}\label{form1}
A_{n_1}=
\begin{pmatrix}
\lambda_1 & \mu_0 & 0 & \cdots & 0\\
\mu_0 & \lambda_1 & 0 & \cdots & 0\\
0  & 0 & \lambda_1 & \cdots & 0\\
\vdots & \vdots & \vdots & \ddots & \vdots\\
0  & 0 & 0 & \cdots & \lambda_1
\end{pmatrix},
A_{n_2}=
\begin{pmatrix}
\lambda_2+\mu_0 & 0 & 0 & \cdots & 0\\
0 & \lambda_2-\mu_0 & 0 & \cdots & 0\\
0  & 0 & \lambda_2 & \cdots & 0\\
\vdots & \vdots & \vdots & \ddots & \vdots\\
0  & 0 & 0 & \cdots & \lambda_2
\end{pmatrix},
\end{equation}
and
$$A_{n_3}=\lambda_3I_p,~~~~ A_{n_r}=0, r\ge 4.$$

Wintgen first proved the inequality \eqref{ineq} for surfaces $f:M^2\to S^4$. The equality holds at $x\in M^2$ if and only if the curvature ellipse
of $M^2$ in $S^4$ at $x$ is a circle \cite{wint, gr}.
$f:M^2\to S^4$ is called a \emph{super-conformal} surface if this holds true at every point. It is well-known that such surfaces
correspond to images of complex curves in $\mathbb{CP}^3$ via the Penrose twistor projection $\pi:\mathbb{ CP}^3\longrightarrow \mathbb{S}^4$. According to the suggestion of \cite{chen10,ml}, we make the following definition.

\begin{Definition}
A submanifold $f:M^m\longrightarrow \mathbb{Q}^{m+p}_c$ ia called a \emph{Wintgen ideal submanifold} if the equality is satisfied in \eqref{ineq} at every point of $M^m$.
\end{Definition}

We briefly review known results on the classification of Wintgen ideal submanifolds. It was shown in \cite{gr} that $f:M^2\longrightarrow \mathbb{Q}^{2+p}_c$ is a Wintgen ideal surface
if and only if the ellipse of curvature of $f$ at $x$ is a circle. When being also minimal surfaces in the specific space form, they were already known as \emph{super-minimal surfaces}. Such examples are abundant.

For three dimensional Wintgen ideal subanifolds, when they are minimal, they belong to the class of (three dimensional) austere submanifolds. The later was classified locally by Bryant for Euclidean submanifolds \cite{br}, by Dajczer and Florit in the unit sphere \cite{Dajczer}, by Choi and Lu \cite{Lu,Lu2} in hyperbolic space.

Finally, in \cite{Dajczer1}, Dajczer and
Tojeiro provided a parametric construction of Wintgen ideal submanifolds of codimension two and arbitrary dimension in terms of minimal surfaces in the Euclidean space.

An important observation of Dajczer and Tojeiro in \cite{Dajczer1} is that the inequality as well as the equality case (and the class of Wintgen ideal submanifolds) are conformally invariant property. This follows from the observation
in \cite{DDVV} that inequality \eqref{ineq} holds at a point $x\in M^m$ if and only if
\[
\sum_{\alpha,\beta=1}^p||[\bar{A}_{\alpha},\bar{A}_{\beta}]||^2
\leq\left(\sum_{\alpha=1}^p||\bar{A}_{\alpha}||\right)^2
\]
is satisfied for the traceless shape operators $\bar{A}_1,\cdots,\bar{A}_p$ at $x\in M^m$, whereas $\{\bar{A}_i\}$ are conformal invariant objects (up to a scalar factor). It follows that Wintgen ideal submanifolds in the sphere $\mathbb{S}^{m+p}$ or hyperbolic space $\mathbb{H}^{m+p}$ are the pre-image of a stereographic projection of Wintgen ideal submanifolds in $\mathbb{R}^{m+p}$.

Thus it is appropriate to put the study of Wintgen ideal submanifolds in
the framework of M\"{o}bius geometry. For the same reason it is no restriction when we describe them in the Euclidean space.
This is exactly our main goal in this paper.

As the main result, we give a classification of Wintgen ideal submanifolds with integrable canonical distribution $\mathbb{D}=Span\{e_1,e_2\}$.
It is clear from \eqref{form1} that $\mathbb{D}$ is well-defined when the submanifold is nowhere totally umbilic.
\begin{Theorem}\label{the1}
Let $f:M^m\longrightarrow \mathbb{R}^{m+p} (m\geq3)$ be a Wintgen ideal submanifold without umbilic points. If the canonical distribution $\mathbb{D}=span\{e_1,e_2\}$
 is integrable, then locally $f$ is M\"{o}bius equivalent to
\begin{description}
\item (i) a cone over a super-minimal surface in ${\mathbb S}^{2+p}$;
\item (ii) or a cylinder over a super-minimal surface in ${\mathbb R}^{2+p}$;
\item (iii) or a rotational submanifold over a super-minimal surface in ${\mathbb H}^{2+p}$.
\end{description}
\end{Theorem}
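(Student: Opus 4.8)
The plan is to work within the M\"obius geometry framework and exploit the rigidity of the second fundamental form given by \eqref{form1}. First I would set up the conformal Gauss map / M\"obius invariants: from the pointwise normal form \eqref{form1} we have the canonical distribution $\mathbb{D}=\mathrm{span}\{e_1,e_2\}$ and its orthogonal complement $\mathbb{D}^\perp=\mathrm{span}\{e_3,\dots,e_m\}$, and on $\mathbb{D}^\perp$ all shape operators are scalar, so $\mathbb{D}^\perp$ consists of umbilic directions for the M\"obius second fundamental form. I would write out the structure equations (Gauss, Codazzi, Ricci) of the M\"obius invariants $(\mathbb{B}, \Phi, \mathbb{A})$ adapted to the orthonormal frame $\{e_1,\dots,e_m\}$ together with the normal frame $\{n_1,n_2,\dots\}$ realizing \eqref{form1}, keeping track of the connection coefficients. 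The key structural input is the hypothesis that $\mathbb{D}$ is integrable; combined with the Codazzi equations this should force strong constraints on how $\mathbb{D}^\perp$ sits inside $M^m$.

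The main technical step is to show that, under integrability of $\mathbb{D}$, the orthogonal distribution $\mathbb{D}^\perp$ is also integrable, its leaves are totally umbilic (indeed "totally geodesic up to conformal change") submanifolds of $M^m$, and moreover that along each such leaf the M\"obius position vector / conformal Gauss map degenerates in a controlled way — i.e. the leaves are mapped by $f$ into $(p+3)$-dimensional round spheres (or their M\"obius images). Concretely I would prove that the mean curvature sphere congruence is constant along $\mathbb{D}^\perp$, so that $M^m$ is foliated by pieces of Wintgen ideal submanifolds that are "translates" of a fixed surface. This is essentially a de Rham–type splitting argument carried out for the conformal structure: one shows the relevant curvature terms mixing $\mathbb{D}$ and $\mathbb{D}^\perp$ vanish, giving a local product/warped-product decomposition of the M\"obius metric.

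Having obtained the foliation by surfaces, the next step is to identify those surfaces. The restriction of the second fundamental form to $\mathbb{D}=\mathrm{span}\{e_1,e_2\}$ is exactly of the Wintgen-ideal type in codimension $p$ with a circular ellipse of curvature, hence each leaf (after the conformal normalization) is a super-conformal surface; requiring the full submanifold to satisfy the structure equations then forces each leaf to be actually \emph{minimal} in the appropriate totally umbilic $\mathbb{Q}^{2+p}_c\subset\mathbb{R}^{m+p}$, i.e. super-minimal. The trichotomy $c>0,\ c=0,\ c<0$ corresponds to the three conclusions: the one-parameter family of parallel spheres/planes sweeping out $M^m$ is either a cone (spheres through a common point, $c$ scaled to $+1$ on the leaf), a cylinder (parallel planes, $c=0$), or a rotational family (spheres with a common axis, $c=-1$ on the leaf). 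I would make this precise by writing down the explicit M\"obius-geometric models of cone, cylinder and rotational submanifold over a surface and checking that their M\"obius invariants match the ones derived above, invoking a M\"obius congruence theorem (fundamental theorem of M\"obius submanifold geometry) to conclude local M\"obius equivalence.

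The hardest part I expect is the middle step: proving that $\mathbb{D}^\perp$ is integrable with the leaf-wise sphere/plane structure and that the mean curvature sphere is parallel along $\mathbb{D}^\perp$. The Codazzi equations for the M\"obius invariants are a fairly large system, and extracting the vanishing of all the "mixed" connection and curvature components — in arbitrary codimension $p$, where the normal bundle geometry (the normal connection, the $\mathbb{D}^\perp$-components of $n_1,n_2$) also enters — will require careful bookkeeping and a clever choice of adapted frame to avoid a combinatorial explosion. A secondary subtlety is degenerate or special loci (e.g. where $\mu_0$ or the $\lambda_i$ coincide or vanish), which must be handled so that the frame in \eqref{form1} can be chosen smoothly; since the statement is local and away from umbilic points, one argues on the open dense set where the invariants are regular and then notes the construction extends by continuity.
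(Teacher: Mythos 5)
Your plan follows essentially the same route as the paper: an adapted M\"obius moving frame for the normal form of $\mathbf{B}$, the integrability of $\mathbb{D}$ fed into the Codazzi/Gauss/Ricci equations to show that the mean curvature spheres descend to the two-dimensional leaf space and that the canonical lift $Y$ splits as a (warped) product, and then the trichotomy by the curvature of the surface factor yielding cones, cylinders, or rotational submanifolds over super-minimal surfaces. The paper implements the splitting by exhibiting two fixed (parallel) subspaces $V,V^{\perp}\subset\mathbb{R}^{m+p+2}_1$ and an explicit invariant $K$ whose sign separates the three cases (rather than matching invariants against the models via a congruence theorem), but that is a difference of implementation, not of approach.
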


This paper is organized as follows. In section 2, we review the
elementary facts about M\"{o}bius geometry for submanifolds in
$\mathbb{R}^{m+p}$. In section 3, we describe the construction of Wintgen ideal submanifolds as cylinders, cones, or rotational submanifolds. In section 4, we give the proof of our main theorem.

 \vskip 1 cm
\section{Submanifold theory in M\"obius geometry}

In this section we briefly review the theory of submanifolds
in M\"obius geometry. For details we refer to \cite{CPWang} and \cite{liu}.

Let $\mathbb{R}^{m+p+2}_1$ be the Lorentz space with inner
product $\langle \cdot,\cdot\rangle $ defined by
\[
\langle Y,Z\rangle =-Y_0Z_0+Y_1Z_1+\cdots+Y_{m+p+1}Z_{m+p+1},
\]
where $Y=(Y_0,Y_1,\cdots,Y_{m+p+1}),Z=(Z_0,Z_1,\cdots,Z_{m+p+1})\in
\mathbb{R}^{m+p+2}$.

Let $f:M^m\rightarrow \mathbb{R}^{m+p}$ be a submanifold without umbilics and
assume that $\{e_i\}$ is an orthonormal basis with respect to the
induced metric $I=df\cdot df$ with $\{\theta_i\}$ the dual basis.
Let $\{n_{r}|1\le r\le p\}$ be a local orthonormal basis for the
normal bundle. As usual we denote the second fundamental form and
the mean curvature of $f$ as
\[
II=\sum_{ij,\gamma}h^{r}_{ij}\theta_i\otimes\theta_j n_{r},
~~H=\frac{1}{m}\sum_{j,r}h^{r}_{jj}n_{r}
=\sum_{r}H^{r}n_{r}.
\]
We define the M\"{o}bius position vector $Y:
M^m\rightarrow \mathbb{R}^{m+p+2}_1$ of $f$ by
\[
Y=\rho\left(\frac{1+|f|^2}{2},\frac{1-|f|^2}{2},f\right),~~
~~\rho^2=\frac{m}{m-1}\left|II-\frac{1}{m} tr(II)I\right|^2~.
\]
It is known that $Y$ is a well-defined canonical lift of $f$. Two submanifolds $f,\bar{f}: M^m\rightarrow \mathbb{R}^{m+p}$
are M\"{o}bius equivalent if there exists $T$ in the Lorentz group
$\mathbb{O}(m+p+1,1)$ in $\mathbb{R}^{m+p+2}_1$ such that $\bar{Y}=YT.$ It follows
immediately that
\[
g=\langle dY,dY\rangle =\rho^2 dx\cdot dx
\]
is a M\"{o}bius invariant, called the M\"{o}bius metric of $f$.

Let $\Delta$ be the Laplacian with respect to $g$. Define
\[
N=-\frac{1}{m}\Delta Y-\frac{1}{2m^2}\langle \Delta Y,\Delta Y\rangle Y,
\]
which satisfies
\[
\langle Y,Y\rangle =0=\langle N,N\rangle , ~~\langle N,Y\rangle =1~.
\]
Let $\{E_1,\cdots,E_m\}$ be a local orthonormal basis for $(M^m,g)$
with dual basis $\{\omega_1,\cdots,\omega_m\}$. Write
$Y_j=E_j(Y)$. Then we have
\[
\langle Y_j,Y\rangle =\langle Y_j,N\rangle =0, ~\langle Y_j,Y_k\rangle =\delta_{jk}, ~~1\leq j,k\leq m.
\]
We define
\[
\xi_r=H^r\left(\frac{1+|f|^2}{2},\frac{1-|f|^2}{2},f\right)
+\left(f\cdot n_r,-f\cdot n_r,n_r\right).
\]
Then $\{\xi_{1},\cdots,\xi_p\}$ be the orthonormal basis of the
orthogonal complement of $\mathrm{Span}\{Y,N,Y_j|1\le j\le m\}$.
And $\{Y,N,Y_j,\xi_{r}\}$ form a moving frame in $R^{m+p+2}_1$
along $M^m$.
\begin{Remark}
Geometrically, $\xi_r$ corresponds to the unique sphere tangent to $M_m$ at one point $x$ with normal vector $n_r$ and the same mean curvature $H^r(x)$. We call $\{\xi_r\}$ the mean curvature spheres of $M^m$.
\end{Remark}
We will use the following range of indices in this section: $1\leq
i,j,k\leq m; 1\leq r,s\leq p$. We can write the structure equations
as below:
\begin{eqnarray*}
&&dY=\sum_i \omega_i Y_i,\\
&&dN=\sum_{ij}A_{ij}\omega_i Y_j+\sum_{i,\gamma} C^{\gamma}_i\omega_i \xi_{r},\\
&&d Y_i=-\sum_j A_{ij}\omega_j Y-\omega_i N+\sum_j\omega_{ij}Y_j
+\sum_{j,\gamma} B^{\gamma}_{ij}\omega_j \xi_{r},\\
&&d \xi_{r}=-\sum_i C^{r}_i\omega_i Y-\sum_{ijr}\omega_i
B^{r}_{ij}Y_j +\sum_{s} \theta_{rs}\xi_{s},
\end{eqnarray*}
where $\omega_{ij}$ are the connection $1$-forms of the M\"{o}bius
metric $g$ and $\theta_{rs}$ the normal connection $1$-forms. The
tensors
\[
{\bf A}=\sum_{ij}A_{ij}\omega_i\otimes\omega_j,~~ {\bf
B}=\sum_{ijr}B^{r}_{ij}\omega_i\otimes\omega_j \xi_{r},~~
\Phi=\sum_{jr}C^{r}_j\omega_j \xi_{r}
\]
are called the Blaschke tensor, the M\"{o}bius second fundamental
form and the M\"{o}bius form of $x$, respectively. The covariant
derivatives of $C^{r}_i, A_{ij}, B^{r}_{ij}$ are defined by
\begin{eqnarray*}
&&\sum_j C^{r}_{i,j}\omega_j=d C^{r}_i+\sum_j C^{r}_j\omega_{ji}
+\sum_{s} C^{s}_j\theta_{sr},\\
&&\sum_k A_{ij,k}\omega_k=d A_{ij}+\sum_k A_{ik}\omega_{kj}+\sum_k A_{kj}\omega_{ki},\\
&&\sum_k B^{r}_{ij,k}\omega_k=d B^{r}_{ij}+\sum_k
B^{r}_{ik}\omega_{kj} +\sum_k B^{r}_{kj}\omega_{ki}+\sum_{s}
B^{s}_{ij}\theta_{sr}.
\end{eqnarray*}
The integrability conditions for the structure equations are given by
\begin{eqnarray}
&&A_{ij,k}-A_{ik,j}=\sum_{r}B^{r}_{ik}C^{r}_j-B^{r}_{ij}C^{r}_k,\label{equa1}\\
&&C^{r}_{i,j}-C^{r}_{j,i}=\sum_k(B^{r}_{ik}A_{kj}-B^{r}_{jk}A_{ki}),\label{equa2}\\
&&B^{r}_{ij,k}-B^{r}_{ik,j}=\delta_{ij}C^{r}_k-\delta_{ik}C^{r}_j,\label{equa3}\\
&&R_{ijkl}=\sum_{r}B^{r}_{ik}B^{r}_{jl}-B^{r}_{il}B^{r}_{jk}
+\delta_{ik}A_{jl}+\delta_{jl}A_{ik}
-\delta_{il}A_{jk}-\delta_{jk}A_{il},\label{equa4}\\
&&R^{\perp}_{rs kl}=\sum_k
B^{r}_{ik}B^{s}_{kj}-B^{s}_{ik}B^{r}_{kj}. \label{equa5}
\end{eqnarray}
Here $R_{ijkl}$ denote the curvature tensor of $g$,
$\kappa=\frac{1}{n(n-1)}\sum_{ij}R_{ijij}$ is its normalized
M\"{o}bius scalar curvature. It follows from \eqref{equa4} that
the Ricci tensor of $g$ satisfies
\begin{equation}\label{equa6}
R_{ij}:=\sum_k R_{ikjk}=-\sum_{kr} B^{r}_{ik}B^{r}_{kj}+(tr{\bf
A})\delta_{ij}+(m-2)A_{ij}.
\end{equation}
Other restrictions on tensors $\bf A, B$ are
\begin{eqnarray}
&&\sum_j B^{r}_{jj}=0, ~~~\sum_{ijr}(B^{r}_{ij})^2=\frac{m-1}{m}, \label{equa7}\\
&& tr{\bf A}=\sum_j A_{jj}=\frac{1}{2m}(1+m^2\kappa).\label{equa8}
\end{eqnarray}
 We know that all coefficients in the
structure equations are determined by $\{g, {\bf B}\}$ and the
normal connection $\{\theta_{\alpha\beta}\}$.

\section{Examples of Wintgen ideal submanifolds}
In this section we will use minimal Wintgen ideal submanifolds
in space forms $\mathbb{R}^n, \mathbb{S}^n$ or $\mathbb{H}^n$ to construct other general examples. Note that being a minimal submanifold is not a conformal invariant property.

We remark that a minimal Wintgen ideal submanifold in any space form is an austere submanifold of rank two, and the converse is also true. Super-minimal surfaces in space forms are special examples and there are plenty of them, including all minimal 2-spheres in $\mathbb{S}^n$ and all complex curves in $\mathbb{C}^n=\mathbb{R}^{2n}$.

\begin{Definition}
Let $u: M^r\longrightarrow \mathbb{R}^{r+p}$ be an immersed  submanifold. We define
\emph{the cylinder over $u$} in $\mathbb{R}^{m+p}$ as
\[
f=(u,id):M^r\times \mathbb{R}^{m-r}\longrightarrow
\mathbb{R}^{r+p} \times \mathbb{R}^{m-r}=\mathbb{R}^{m+p},
\]
where $id:\mathbb{R}^{m-r}\longrightarrow \mathbb{R}^{m-r}$ is the identity map.
\end{Definition}

\begin{PROPOSITION}\label{31}
Let $u: M^r\longrightarrow \mathbb{R}^{r+p}$ be an immersed submanifold. Then the cylinder
$f=(u,id):M^r\times \mathbb{R}^{m-r}\longrightarrow \mathbb{R}^{m+p}$ is a Wintgen ideal submanifold if and only if $u: M^r\longrightarrow \mathbb{R}^{r+p}$ is a
minimal Wintgen ideal submanifold.
\end{PROPOSITION}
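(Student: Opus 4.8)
The plan is to verify directly that the second fundamental form of the cylinder $f=(u,\mathrm{id})$ has the normal form \eqref{form1} at every point if and only if $u$ is minimal and Wintgen ideal. First I would record the second fundamental form of a cylinder: if $\{e_1,\dots,e_r\}$ is an orthonormal frame on $M^r$ adapted to $u$ and $\{e_{r+1},\dots,e_m\}$ is the standard frame on the flat factor $\mathbb{R}^{m-r}$, then the normal bundle of $f$ is exactly the normal bundle of $u$ (pulled back), and the shape operators satisfy $A^f_{n}(e_i,e_j)=A^u_n(e_i,e_j)$ for $1\le i,j\le r$, while $A^f_n(e_i,e_j)=0$ whenever $i$ or $j$ exceeds $r$. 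In particular the mean curvature of $f$ is $\tfrac{r}{m}$ times that of $u$, viewed as the same normal vector, so $H^f=0$ iff $H^u=0$; but more importantly the \emph{traceless} shape operators $\bar A^f_\alpha$ of $f$ are the traceless parts of matrices that are block-diagonal with an $r\times r$ block equal to $A^u_\alpha$ and an $(m-r)\times(m-r)$ zero block.

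Next I would invoke the algebraic characterization of Wintgen ideal points recalled in the introduction: $f$ attains equality in the DDVV inequality at a point exactly when $\sum_{\alpha,\beta}\|[\bar A^f_\alpha,\bar A^f_\beta]\|^2=\bigl(\sum_\alpha\|\bar A^f_\alpha\|\bigr)^2$, equivalently when the traceless shape operators can be simultaneously brought to the shape \eqref{form1}. The key point is then purely linear-algebraic: a family of symmetric $m\times m$ matrices that are block-diagonal with a vanishing $(m-r)\times(m-r)$ block can be put in the form \eqref{form1} (with $m$ replaced by $m$, the $\lambda$'s arising from the traces) if and only if the $r\times r$ blocks, after removing the common trace contribution, can be put in the analogous form for dimension $r$ — \emph{and} the leftover scalar pieces vanish. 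Concretely, since the full matrix $A^f_{n_1}$ in \eqref{form1} is $\lambda_1 I_m$ plus a rank-two off-diagonal piece, and its bottom block must be zero, we are forced to have $\lambda_1=0$; similarly $\lambda_2=\lambda_3=0$, i.e. $H^f=0$, i.e. $u$ is minimal. Once $\lambda_1=\lambda_2=\lambda_3=0$, the condition \eqref{form1} on $f$ is literally the condition \eqref{form1} on $u$ in dimension $r$ (padded by zeros), which is the statement that $u$ is Wintgen ideal and minimal. Conversely, if $u$ is minimal and Wintgen ideal, its traceless shape operators already have the form \eqref{form1} with $\lambda_i=0$, and padding by a zero block preserves this form, so $f$ is Wintgen ideal.

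The one subtlety — and the step I expect to require the most care — is the "only if" direction's handling of the frame: the orthonormal basis realizing \eqref{form1} for $f$ at a point need not be adapted to the product structure a priori. So I would argue that since the common eigenspace decomposition forced by \eqref{form1} must be compatible with the kernel common to all $\bar A^f_\alpha$ (which contains the flat directions $e_{r+1},\dots,e_m$), one can rotate within the relevant eigenspaces to assume the distinguished basis $\{e_1,e_2\}$ of \eqref{form1} lies in the $M^r$-tangent space; this uses $r\ge 2$, and when $r=2$ the surface case of the characterization (the ellipse of curvature being a circle, i.e. super-conformality, together with minimality giving super-minimality) applies directly. With that alignment in hand the reduction to the dimension-$r$ statement is immediate, and the proposition follows. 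I would also remark in passing why the $\lambda$'s are forced to vanish: the trace of the $r\times r$ block of $A^f_{n_k}$ equals $mH^k$ but must also equal $r\cdot(\text{the }\lambda\text{ appearing in that block's diagonal})$ after accounting for the off-diagonal perturbations, and consistency with the vanishing bottom block pins down $H^k=0$.
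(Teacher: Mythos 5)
Your proposal is correct and takes essentially the same approach as the paper: the paper's proof likewise records that the cylinder's fundamental forms are $I=I_u+I_{\mathbb{R}^{m-r}}$ and $II=II_u$ (so the shape operators of $f$ are exactly your block-diagonal paddings of those of $u$) and then declares the equivalence clear. You merely fill in the linear-algebraic step the paper leaves implicit, using the pointwise normal form \eqref{form1} together with the common kernel supplied by the flat factor to force the $\lambda$'s (hence the mean curvature) to vanish and to reduce to the $r$-dimensional statement.
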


\begin{proof} Let $\eta_1,\cdots,\eta_p$ be an orthonormal frame in the normal bundle of $u$ in $\mathbb{R}^{r+p}$. Then $e_r=(\eta_r,\vec{0})\in
\mathbb{R}^{m+p}$ is such a frame of $f$. The first and second
fundamental forms $I,II$ of $f$ are related with corresponding forms $I_u,II_u$ of $u$ by
\begin{equation}\label{II31}
I=I_u+I_{\mathbb{R}^{m-r}}, \;\; II=II_u,
\end{equation}
where $I_{\mathbb{R}^{m-r}}$ denotes the standard metric of $\mathbb{R}^{m-r}$.
Clearly $f$ is a Wintgen ideal submanifold if and only if $u$ is a minimal Wintgen ideal submanifold. This completes the proof to Proposition \ref{31}.
\end{proof}

\begin{Remark}
The M\"{o}bius position vector $Y:M^r\times \mathbb{R}^{m-r}\longrightarrow \mathbb{R}^{m+p+2}_1$ of the cylinder $f$ is
\begin{equation}\label{mop1}
\begin{split}
&Y=\rho_0\left(\frac{1+|u|^2+|y|^2}{2},
\frac{1-|u|^2-|y|^2}{2},u,y\right),\\
&Y:M^r\times \mathbb{R}^{m-r}\longrightarrow \mathbb{R}^{r+p+2}_1\times \mathbb{R}^{m-r},
\end{split}
\end{equation}
where $\rho_0=\frac{m}{m-1}(|II_u|^2-mH_u^2): M^r\longrightarrow \mathbb{R}$, and $y:\mathbb{R}^{m-r}\longrightarrow \mathbb{R}^{m-r}$ is the identity map.
\end{Remark}

\begin{Definition}
Let $u:M^r\longrightarrow \mathbb{S}^{r+p}\subset \mathbb{R}^{r+p+1}$ be an immersed submanifold.
We define \emph{the cone over $u$} in $\mathbb{R}^{m+p}$ as
\begin{equation*}
\begin{split}
&f:R^+\times \mathbb{R}^{m-r-1}\times M^r\longrightarrow \mathbb{R}^{m+p},\\
&~~~~~~f(t,y,u)=(y,tu),
\end{split}
\end{equation*}
\end{Definition}

\begin{PROPOSITION}\label{32}
Let $u: M^r\longrightarrow \mathbb{S}^{r+p}$ be an immersed submanifold. Then the cone
$f=(y,tu):R^+\times \mathbb{R}^{m-r-1}\times M^r\longrightarrow \mathbb{R}^{m+p}$ is a Wintgen ideal submanifold if and only if $u$ is a minimal Wintgen ideal submanifold in $\mathbb{S}^{r+p}$.
\end{PROPOSITION}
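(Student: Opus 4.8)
The plan is to follow the pattern of Proposition~\ref{31}: compute the fundamental forms of the cone $f$ in terms of those of $u$, read off the shape operators, and then compare with the normal form \eqref{form1}. Write a point of the domain as $(t,y,x)\in\mathbb{R}^+\times\mathbb{R}^{m-r-1}\times M^r$ and view $u=u(x)$ in $\mathbb{S}^{r+p}\subset\mathbb{R}^{r+p+1}$, so that $f=(y,tu)$ in $\mathbb{R}^{m-r-1}\times\mathbb{R}^{r+p+1}=\mathbb{R}^{m+p}$. If $\{\eta_1,\dots,\eta_p\}$ is an orthonormal frame of the normal bundle of $u$ in $\mathbb{S}^{r+p}$, then $(\vec 0,\eta_1),\dots,(\vec 0,\eta_p)$ is an orthonormal frame of the normal bundle of $f$, since each $\eta_r$ is already orthogonal to $u=\partial_t(tu)$ and to the $u_{x^a}$. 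A direct computation then gives
\[
I=\mathrm{d}t^2+t^2 I_u+I_{\mathbb{R}^{m-r-1}},
\]
while the second fundamental form of $f$ vanishes on $\partial_t$ and on the $\mathbb{R}^{m-r-1}$-directions and equals $t\,II_u$ on $TM^r$ (after identifying the normal bundles). Equivalently, in the orthonormal tangent frame $\{E_0=\partial_t,\ E_\alpha\ (\text{flat}),\ E_a=\tfrac1t\hat E_a\}$ with $\{\hat E_a\}$ orthonormal for $I_u$, each shape operator $A^f_{\eta_r}$ is block diagonal, vanishing on the $(m-r)$-dimensional \emph{flat distribution} $\mathcal{F}=\mathrm{span}\{E_0,E_\alpha\}$ and equal to $\tfrac1t A^u_{\eta_r}$ on $TM^r$; in particular $\mathrm{tr}\,A^f_{\eta_r}=\tfrac1t\,\mathrm{tr}\,A^u_{\eta_r}$.

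For the direction ``$u$ minimal Wintgen ideal $\Rightarrow f$ Wintgen ideal'', minimality of $u$ makes every $A^u_{\eta_r}$ trace-free, and the Wintgen ideal property provides, after an $\mathbb{O}(p)$ rotation of the normal frame and a rotation of the tangent frame on $M^r$, an orthonormal frame in which $A^u_{\eta_1},A^u_{\eta_2}$ take the form \eqref{form1} (in $r$ tangent dimensions) with $\lambda_1=\lambda_2=\lambda_3=0$ and $A^u_{\eta_r}=0$ for $r\ge3$. Appending $\mathcal{F}$ to the tangent frame and scaling by $1/t$, the operators $A^f_{\eta_r}$ then have precisely the form \eqref{form1} with $\lambda_i\equiv0$ and $\mu_0=\mu/t$, the distinguished $2$-plane $\mathbb{D}$ lying inside the $TM^r$ block; hence $f$ is Wintgen ideal (and in fact still minimal, in agreement with the classical fact that a cone over a minimal submanifold of a sphere is minimal).

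For the converse, assume $f$ is Wintgen ideal and, as in Theorem~\ref{the1}, without umbilic points, so $\mu_0\neq0$. The crux is to show $\mathbb{D}\subseteq TM^r$, equivalently $\mathcal{F}\subseteq\mathbb{D}^\perp$. Every shape operator $A^f_n$ ($n$ normal) annihilates $\mathcal{F}$, so the nonzero distribution $\mathcal{F}$ lies in the relative nullity $\Delta_0=\{v:\ A^f_n v=0\ \forall n\}$ of $f$; and a short inspection of \eqref{form1}, using $\mu_0\neq0$, shows that any nonzero vector of $\Delta_0$ must lie in $\mathbb{D}^\perp$ and forces $\lambda_1=\lambda_2=\lambda_3=0$. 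Therefore $\mathbb{D}\subseteq TM^r$, every $A^f_{\eta_r}$ is trace-free and supported on $\mathbb{D}$, and restricting the relation $A^u_{\eta_r}=t\,A^f_{\eta_r}|_{TM^r}$ to $TM^r$ shows that $\{A^u_{\eta_r}\}$ has the form \eqref{form1} (in $r$ tangent dimensions) with $\lambda_i=0$ and $\mu_0$ replaced by $t\mu_0$. This says at once that $u$ is Wintgen ideal and that each $A^u_{\eta_r}$ is trace-free, i.e.\ that $u$ is a minimal Wintgen ideal submanifold of $\mathbb{S}^{r+p}$.

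The only nonroutine step is the inclusion $\mathbb{D}\subseteq TM^r$: it is the single place where one must argue at the level of the pointwise algebra of \eqref{form1} and genuinely uses the absence of umbilic points, everything else being bookkeeping with the block decomposition of the shape operators. One should also dispose of the degenerate case in which $u$ is totally geodesic in $\mathbb{S}^{r+p}$ (then $II_u=0$, the cone $f$ is an open piece of an affine subspace and is totally umbilic), which is precisely what the no-umbilics hypothesis excludes.
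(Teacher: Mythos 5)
Your proposal is correct and follows essentially the same route as the paper: the paper's proof consists exactly of the computation $I=t^2I_u+I_{\mathbb{R}^{m-r}}$, $II=t\,II_u$ and the remark that ``the conclusion follows easily,'' and your block-diagonal shape-operator comparison with \eqref{form1}, including the relative-nullity argument showing $\mathbb{D}\subseteq TM^r$ in the converse, is just a careful spelling-out of that omitted step. The only cosmetic point is that the proposition itself carries no no-umbilic hypothesis, but at a point with $\mu_0=0$ the flat nullity directions force all $\lambda_i=0$, so such a point corresponds to a totally geodesic point of $u$ where minimality and the DDVV equality hold trivially, and your argument covers the rest.
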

\begin{proof}
The first and second
fundamental forms of $f$ are, respectively,
\[
I=t^2I_u+I_{\mathbb{R}^{m-r}}, \;\; II=t~II_u,
\]
where $I_u,II_u,I_{\mathbb{R}^{m-r}}$ are understood as before.
The conclusion follows easily.
\end{proof}

\begin{Remark}
The M\"{o}bius position vector $Y:R^+\times \mathbb{R}^{m-r-1}\times M^r\longrightarrow \mathbb{R}^{m+p+2}_1$ of the cone $f$ is
\begin{equation*}
Y=\rho_0\left(\frac{1+t^2+|y|^2}{2t},\frac{1-t^2-|y|^2}{2t},y,u\right),
\end{equation*}
where $\rho_0=\frac{m}{m-1}(|II_u|^2-mH_u^2): M^r\longrightarrow \mathbb{R}$, and $y:\mathbb{R}^{m-r-1}\longrightarrow \mathbb{R}^{m-r-1}$ is the identity map.
Let $$\mathbb{H}^{m-r}=\{(y_0,y)\in \mathbb{R}^{m-r+1}|-y_0^2+|y|^2=-1, y_0\geq 1\}\cong R^+\times \mathbb{R}^{m-r-1},$$
then $(\frac{1+t^2+|y|^2}{2t},\frac{1-t^2-|y|^2}{2t},y):R^+\times \mathbb{R}^{m-r-1}=\mathbb{H}^{m-r}\to \mathbb{H}^{m-r}$ is nothing else but the identity map. And the M\"{o}bius position vector of the cone $f$ is
\begin{equation}\label{mop2}
Y=\rho_0(id,u):\mathbb{H}^{m-r}\times M^r\to \mathbb{H}^{m-r}\times \mathbb{S}^{r+p}\subset \mathbb{R}^{m+p+2}_1,
\end{equation}
where $\rho_0\in C^{\infty}(M^r)$ and $id:\mathbb{H}^{m-r}\to \mathbb{H}^{m-r}$ is a identity map.
\end{Remark}

\begin{Definition}
Let $\mathbb{R}^{r+p}_+=\{(x_1,\cdots,x_{r+p})\in \mathbb{R}^{r+p}|x_{r+p}>0\}$
be the upper half-space endowed with the standard
hyperbolic metric
\[
ds^2=\frac{1}{x_{r+p}^2}\sum_{i=1}^{r+p} dx_i^2~.
\]
Let $u=(x_1,\cdots,x_{r+p}):M^r\longrightarrow \mathbb{R}^{r+p}_+$ be
an immersed submanifold. We define
\emph{rotational submanifold over $u$} in $\mathbb{R}^{m+p}$ as
\begin{equation*}
\begin{split}
&f:M^r\times \mathbb{S}^{m-r}\longrightarrow \mathbb{R}^{m+p},\\
&f(u,\phi)=(x_1,\cdots,x_{r+p-1},x_{r+p}\phi).
\end{split}
\end{equation*}
where $\phi:\mathbb{S}^{m-r}\longrightarrow \mathbb{R}^{m-r+1}$ is the standard sphere.
\end{Definition}

\begin{PROPOSITION}\label{33}
Let $u=(x_1,\cdots,x_{r+p}):M^r\longrightarrow \mathbb{R}^{r+p}_+$ be
an immersed submanifold.
Then the rotational submanifold
$f:M^r\times \mathbb{S}^{m-r}\longrightarrow \mathbb{R}^{m+p}$
is a Wintgen ideal submanifold if and only if $u$ is a minimal Wintgen ideal submanifold.
\end{PROPOSITION}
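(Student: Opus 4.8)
The plan is to follow the pattern of Propositions~\ref{31} and~\ref{32}: express the first and second fundamental forms and the normal bundle of the rotational submanifold $f$ in terms of the data of $u$, and then recognise the equality case of the DDVV inequality through the normal form \eqref{form1}. Write $u=(x_1,\dots,x_{r+p})$ and set $x:=x_{r+p}>0$ (a function on $M^r$). Choose a local orthonormal normal frame $\eta_1,\dots,\eta_p$ of $u$ regarded as a submanifold of the \emph{flat} $\mathbb{R}^{r+p}$; since the flat and hyperbolic metrics on the half-space are conformal, this is also a normal frame of $u$ inside $\mathbb{R}^{r+p}_+$. Writing $\eta_j=(\bar\eta_j,\eta_j^{\,r+p})$ with $\bar\eta_j\in\mathbb{R}^{r+p-1}$, a direct computation should give that $n_j:=(\bar\eta_j,\eta_j^{\,r+p}\phi)$ is an orthonormal normal frame of $f$, and that with respect to the splitting $T(M^r\times\mathbb{S}^{m-r})=TM^r\oplus T\mathbb{S}^{m-r}$,
\[
I_f=x^2\bigl(I_u^{\mathbb H}+g_{\mathbb{S}^{m-r}}\bigr),\qquad
II_f^{\,n_j}=\widehat{II}_u^{\,j}\ \oplus\ \bigl(-x\,\eta_j^{\,r+p}\bigr)g_{\mathbb{S}^{m-r}},
\]
where $I_u^{\mathbb H}$ is the hyperbolic metric of $u$, $\widehat{II}_u^{\,j}$ its flat second fundamental form along $\eta_j$, and $g_{\mathbb{S}^{m-r}}$ the round metric. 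Thus the normal bundles of $f$ and of $u$ are canonically identified, $I_f$ is a conformal change of the Riemannian product of $(M^r,I_u^{\mathbb H})$ with the round sphere (reflecting that the complement of an $(r+p-1)$-plane in $\mathbb{R}^{m+p}$ is conformally $\mathbb{H}^{r+p}\times\mathbb{S}^{m-r}$), and each sphere factor is umbilic in every normal direction $n_j$.

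From these formulas the shape operators of $f$ are block diagonal, $A_f^{\,n_j}=\widehat A_u^{\,j}\oplus(-\eta_j^{\,r+p}/x)\,\mathrm{Id}_{m-r}$ with $\widehat A_u^{\,j}$ the flat shape operator of $u$. Passing to traceless parts $\bar A_f^{\,n_j}$ and using the conformal transformation law of the traceless second fundamental form (it scales by the conformal factor — precisely why the Wintgen ideal property is M\"obius invariant, as recalled in the Introduction), one rewrites
\[
\bar A_f^{\,n_j}=\Bigl(x^{-2}\,\overline{A}_u^{\,\mathbb H,j}+d_j\,\mathrm{Id}_r\Bigr)\ \oplus\ c_j\,\mathrm{Id}_{m-r},\qquad r d_j+(m-r)c_j=0,
\]
where $\overline{A}_u^{\,\mathbb H,j}$ is the traceless shape operator of $u$ in $\mathbb{R}^{r+p}_+$ and $c_j,d_j$ are functions on $M^r$. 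The key step is then: if $f$ is Wintgen ideal and not totally umbilic, the normal form \eqref{form1} produces a $2$-plane field $\mathbb{D}$ on which $\bar A_f^{\,n_1},\bar A_f^{\,n_2}$ are non-degenerate of that special shape while every $\bar A_f^{\,n_j}$ vanishes on $\mathbb{D}^\perp$; one checks from \eqref{form1} that the set of common eigenvectors of $\{\bar A_f^{\,n_j}\}_j$ is exactly $\mathbb{D}^\perp$ (the restrictions of $\bar A_f^{\,n_1}$ and $\bar A_f^{\,n_2}$ to $\mathbb{D}$ have no common eigenvector once $\mu_0\neq0$). Since, by the block structure, $T\mathbb{S}^{m-r}$ is a common eigenspace of all $\bar A_f^{\,n_j}$, it must lie in $\mathbb{D}^\perp$; hence $c_j\equiv0$, so $d_j\equiv0$, $\mathbb{D}\subseteq TM^r$, and $\bar A_f^{\,n_j}=x^{-2}\overline{A}_u^{\,\mathbb H,j}\oplus0$. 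Now $c_j\equiv0$ is, by the transformation law of the mean curvature vector, exactly the statement that $u$ is minimal in the hyperbolic metric, and since multiplying all traceless shape operators by a common positive function preserves \eqref{form1}, the operators $\overline{A}_u^{\,\mathbb H,j}$ satisfy \eqref{form1} on $M^r$, i.e. $u$ is Wintgen ideal. The converse is read off the same block formula: if $u$ is minimal and Wintgen ideal in $\mathbb{R}^{r+p}_+$, then $c_j=d_j=0$ and $\bar A_f^{\,n_j}=x^{-2}\overline{A}_u^{\,\mathbb H,j}\oplus0$ is of the form \eqref{form1} on $M^r\times\mathbb{S}^{m-r}$, the extra directions lying in the common kernel $\mathbb{D}^\perp$.

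I expect the main obstacle to be precisely the point where minimality appears. Unlike the cylinder and the cone, here the warping (equivalently, the conformal factor $x^2$) forces the sphere directions to contribute an \emph{umbilic} summand $c_j\,\mathrm{Id}$ to each traceless shape operator $\bar A_f^{\,n_j}$; one must show that the rigidity built into \eqref{form1} — no common eigenvector of $\bar A_f^{\,n_1}$ and $\bar A_f^{\,n_2}$ inside the canonical distribution $\mathbb{D}$ — is incompatible with $c_j\neq0$, and hence forces $u$ to be minimal. Carrying this out amounts to careful bookkeeping of the conformal transformation laws for the traceless second fundamental form and for the mean curvature vector; once that is in place, the equivalence reduces to the $2$-plane normal form exactly as in Propositions~\ref{31} and~\ref{32}, and the totally umbilic case is trivial.
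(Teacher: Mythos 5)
Your proposal is correct and follows essentially the same route as the paper: both compute $I_f=x_{r+p}^2\bigl(I_u^{\mathbb H}+g_{\mathbb S^{m-r}}\bigr)$ and express the second fundamental form of $f$ as the hyperbolic second fundamental form of $u$ plus an umbilic term proportional to the hyperbolic mean curvature, then read off the Wintgen normal form \eqref{form1}. You merely spell out (correctly) the final step — the sphere directions are common eigenvectors of the traceless shape operators, so with $\mu_0\neq 0$ they must lie in the kernel $\mathbb D^{\perp}$, forcing the umbilic term to vanish, i.e.\ $u$ minimal in the hyperbolic metric — which the paper leaves implicit; the only blemish is the immaterial conformal factor (it is $x_{r+p}^{-1}$, not $x_{r+p}^{-2}$, that multiplies $\overline{A}_u^{\,\mathbb H,j}$), which does not affect the argument.
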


\begin{proof}
Let $\mathbb{R}^{r+p+1}_1$ be the
Lorentz space with inner product
\[
\langle y,y\rangle=-y_1^2+y_2^2+\cdots+y_{r+p+1}^2,\;\; y=(y_1,\cdots,y_{r+p+1}).
\]
Let $\mathbb{H}^{r+p}=\{y\in R^{r+p+1}_1|\langle y,y\rangle =-1,y_1>0\}$ be the hyperbolic space.
Introduce isometry $\tau:\mathbb{R}^{r+p}_+\longrightarrow \mathbb{H}^{r+p}$ as below:
\begin{equation}\label{eq-tau}
\tau(x_1,\cdots,x_{r+p})=\left(\frac{1+x_1^2+\cdots+x_{r+p}^2}{2x_{r+p}},
\frac{1-x_1^2-\cdots-x_{r+p}^2}{2x_{r+p}},\frac{x_1}{x_{r+p}},\cdots,
\frac{x_{r+p-1}}{x_{r+p}}\right)~.
\end{equation}

Let $\eta^1,\cdots,\eta^p$ be the unit normal vectors of $u$ in $\mathbb{R}^{r+p}_+$.
Write
$\eta^i=(\eta^i_1,\cdots,\eta^i_{r+p}).$
Since $\eta^i$ is the unit normal vector, then
\[
\frac{(\eta_1^i)^2+\cdots+(\eta_{r+p}^i)^2}{x_{r+p}^2}=1,~~~1\leq i\leq p.
\]
Thus the unit normal vector of  $f$ in $\mathbb{R}^{m+p}$ is
\[
\xi_i=\frac{1}{x_{r+p}}(\eta^i_1,\cdots,\eta^i_{r+p}\phi).
\]
The first fundamental form of $u$ is
\[
I_u=\frac{1}{x_{r+p}^2}(dx_1\cdot dx_1+\cdots+dx_{r+p}\cdot dx_{r+p}).
\]
The second fundamental form of $u$ is
\[
II^i_u=-\langle \tau_*(du),\tau_*(d\eta^i)\rangle
=\frac{1}{x_{r+p}^2}(dx_1\cdot d\eta^i_1+\cdots
+dx_{r+p}\cdot d\eta^i_{r+p})-\frac{\eta^i_{r+p}}{x_{r+p}}I_u.
\]
Now we can write out the first and the second fundamental forms
of $f$:
\[
I=dx\cdot dx=x_{r+p}^2(I_u+I_{S^{m-r}}),
~~II^i=x_{r+p}II^i_u-\eta^i_{r+p}(I_u+I_{s^{m-r}}),
\]
where $I_{\mathbb{S}^{m-r}}$ is the standard metric of $\mathbb{S}^{m-r}$.
 This completes the proof.
\end{proof}

\begin{Remark}
The M\"{o}bius position vector $Y:M^r\times \mathbb{S}^{m-r}\longrightarrow \mathbb{R}^{m+p+2}_1$ of the rotational submanifold $f$ is
\begin{equation*}
Y=\rho_0(\frac{1+|u|^2}{2x_{r+p}},
\frac{1-|u|^2}{2x_{r+p}},\frac{x_1}{x_{r+p}},\cdots,
\frac{x_{r+p-1}}{x_{r+p}},\phi),
\end{equation*}
where $\rho_0=\frac{m}{m-1}(|II_u|^2-mH_u^2): M^r\longrightarrow R$, and $\phi:\mathbb{S}^{m-r}\longrightarrow \mathbb{S}^{m-r}$ is the identity map.
Since $(\frac{1+|u|^2}{2x_{r+p}},
\frac{1-|u|^2}{2x_{r+p}},\frac{x_1}{x_{r+p}},\cdots,
\frac{x_{r+p-1}}{x_{r+p}})=\tau(u):M^r\to \mathbb{H}^{r+p}$,
then the M\"{o}bius position vector of the rotational submanifold $f$ is
\begin{equation}\label{mop3}
Y=\rho_0(\tau(u),\phi):M^r\times \mathbb{S}^{m-r}\to \mathbb{H}^{r+p}\times \mathbb{S}^{m-r}\subset \mathbb{R}^{m+p+2}_1,
\end{equation}
where $\rho_0\in C^{\infty}(M^r)$ and $\phi:\mathbb{H}^{m-r}\to \mathbb{H}^{m-r}$ is the identity map.
\end{Remark}

From (\ref{mop1}), (\ref{mop2}) and (\ref{mop3}), we have
\begin{PROPOSITION}\label{redu}
Let $f:M^m\to \mathbb{R}^{m+p}$ be
an immersed submanifold without umbilical points.\\
(1) If there exists a submanifold $u:M^r\to \mathbb{R}^{r+p}$ such that the M\"{o}bius position vector of $f$ is
\begin{equation*}
\begin{split}
&Y=\rho_0\left(\frac{1+|u|^2+|y|^2}{2},\frac{1-|u|^2-|y|^2}{2},
u,y\right)\\
&Y:M^r\times \mathbb{R}^{m-r}\longrightarrow \mathbb{R}^{r+p+2}_1\times \mathbb{R}^{m-r}\subset \mathbb{R}^{m+p+2}_1,
\end{split}
\end{equation*}
where $\rho_0\in C^{\infty}(M^r)$, and $y:\mathbb{R}^{m-r}\longrightarrow \mathbb{R}^{m-r}$ is the identity map.
Then $f$ is a cylinder over $u$.\\
(2) If there exists a submanifold $u:M^r\to \mathbb{S}^{r+p}$ such that the M\"{o}bius position vector of $f$ is
\begin{equation*}
Y=\rho_0(id,u):\mathbb{H}^{m-r}\times M^r\to \mathbb{H}^{m-r}\times \mathbb{S}^{r+p}\subset \mathbb{R}^{m+p+2}_1,
\end{equation*}
where $\rho_0\in C^{\infty}(M^r)$ and $id:\mathbb{H}^{m-r}\to \mathbb{H}^{m-r}$ is the identity map.
Then $f$ is a cone over $u$.\\
(3) If there exists a submanifold $u:M^r\to \mathbb{R}^{r+p}_+$ such that the M\"{o}bius position vector of $f$ is
\begin{equation*}
Y=\rho_0(\tau(u),\phi):M^r\times \mathbb{S}^{m-r}\to \mathbb{H}^{r+p}\times \mathbb{S}^{m-r}\subset \mathbb{R}^{m+p+2}_1,
\end{equation*}
where $\rho_0\in C^{\infty}(M^r)$, $\phi:\mathbb{S}^{m-r}\to \mathbb{S}^{m-r}$ is the identity map, and $\tau(u)$ is defined as in \eqref{eq-tau}.
Then $f$ is the rotational submanifold over $u$.
\end{PROPOSITION}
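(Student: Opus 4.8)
The plan is to recover the immersion $f$ explicitly from its M\"obius position vector $Y$ in each of the three cases and then to recognize the result as one of the constructions of Section~3. The point is elementary: by the definition of $Y$ we have $Y=\rho\bigl(\tfrac{1+|f|^{2}}{2},\tfrac{1-|f|^{2}}{2},f\bigr)$ with $\rho>0$, so writing $Y=(Y_{0},Y_{1},Y_{2},\dots,Y_{m+p+1})$ we get $\rho=Y_{0}+Y_{1}$ and, since $f$ takes values in $\mathbb{R}^{m+p}$ (so $Y_{0}+Y_{1}>0$ throughout),
\[
f=\frac{(Y_{2},\dots,Y_{m+p+1})}{\,Y_{0}+Y_{1}\,}.
\]
This recovery formula is invariant under positive rescaling of $Y$, so it is immaterial that the function $\rho_{0}$ in the hypotheses is not assumed a priori to be the canonical conformal factor. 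In each case I would substitute the prescribed form of $Y$ into this formula; the identities $\langle w,w\rangle=-1$ (for $w$ on a hyperboloid) and $|\phi|^{2}=1$ (for $\phi$ on a sphere) are used both to see that $Y$ indeed lands on the light cone and to check that the recovered $f$ satisfies $|f|^{2}=(Y_{0}-Y_{1})/(Y_{0}+Y_{1})$, as it must.

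In case (1), under the splitting $\mathbb{R}^{m+p+2}_{1}=\mathbb{R}^{r+p+2}_{1}\times\mathbb{R}^{m-r}$ with $Y_{0},Y_{1}$ in the first factor, the prescribed $Y$ gives $Y_{0}+Y_{1}=\rho_{0}$ and $(Y_{2},\dots,Y_{m+p+1})=\rho_{0}(u,y)$, so $f=(u,y)\colon M^{r}\times\mathbb{R}^{m-r}\to\mathbb{R}^{m+p}$, which is the cylinder over $u$ by definition. In case (3), under $\mathbb{R}^{m+p+2}_{1}=\mathbb{R}^{r+p+1}_{1}\times\mathbb{R}^{m-r+1}$, expanding $\tau(u)$ via \eqref{eq-tau} gives $Y_{0}+Y_{1}=\rho_{0}/x_{r+p}$ and $(Y_{2},\dots,Y_{m+p+1})=\rho_{0}\bigl(\tfrac{x_{1}}{x_{r+p}},\dots,\tfrac{x_{r+p-1}}{x_{r+p}},\phi\bigr)$, hence $f=(x_{1},\dots,x_{r+p-1},x_{r+p}\phi)$, the rotational submanifold over $u$. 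Case (2) has the same shape: with $\mathbb{R}^{m+p+2}_{1}=\mathbb{R}^{m-r+1}_{1}\times\mathbb{R}^{r+p+1}$ and the factor $id\colon\mathbb{R}^{+}\times\mathbb{R}^{m-r-1}\to\mathbb{H}^{m-r}$ written out explicitly, one finds $Y_{0}+Y_{1}=\rho_{0}/t$ and recovers $f=(y,tu)$, the cone over $u$.

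I expect the work to be entirely routine, the only delicate point being bookkeeping rather than mathematics: one must keep straight the three Lorentzian splittings of $\mathbb{R}^{m+p+2}_{1}$ above, so that the single timelike direction and the two distinguished coordinates $Y_{0},Y_{1}$ are correctly located in each factor; and in case (2) one must parametrize the hyperboloid $\mathbb{H}^{m-r}\subset\mathbb{R}^{m-r+1}_{1}$ by $\mathbb{R}^{+}\times\mathbb{R}^{m-r-1}$ compatibly with \eqref{mop2}, so that the recovered immersion coincides with the cone of Proposition \ref{32} exactly and not merely up to a diffeomorphism of the domain. Once the splittings are fixed each case reduces to a one-line substitution into the recovery formula.
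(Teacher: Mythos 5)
Your proposal is correct and is essentially the paper's own argument: the paper deduces the proposition directly from the computations \eqref{mop1}, \eqref{mop2}, \eqref{mop3} of the M\"obius position vectors of the cylinder, cone and rotational submanifold, and your rescaling-invariant recovery formula $f=(Y_2,\dots,Y_{m+p+1})/(Y_0+Y_1)$ merely makes explicit the inversion $Y\mapsto f$ that the paper leaves implicit. Your bookkeeping point in case (2) is indeed the only delicate spot: writing the identification $\mathbb{R}^+\times\mathbb{R}^{m-r-1}\cong\mathbb{H}^{m-r}$ as $(t,y)\mapsto\bigl(\tfrac{1+t^2+|y|^2}{2t},\tfrac{1-t^2-|y|^2}{2t},\tfrac{y}{t}\bigr)$ is what makes the recovered map come out exactly as the cone $f=(y,tu)$.
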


\section{Proof of the Main theorem}

A submanifold $f:M^{m}\to
\mathbb{R}^{m+p}$ is a Wintgen ideal submanifold if and only if, at
each point of $M^{m}$, there is a suitable frame such that the second
fundamental form has the form (\ref{form1}). If $\mu_0=0$ in (\ref{form1}), then the Wintgen ideal submanifold is totally umbilical submanifold.
Next we consider non-umbilical Wintgen ideal submanifolds, that is $\mu_0\neq 0$ on $M^m$ and $m\geq 3$.

 Since $\mu_0\neq 0$, we can choose an orthonormal basis $\{E_1,\cdots,E_m\}$ of $T_xM^m$ with respect to the M\"{o}bius metric $g$ and an orthonormal basis $\{\xi_1,\cdots,\xi_p\}$ of $T_x^{\bot}M^m$ such that the coefficients of the M\"obius second fundamental form $B$
has the form
\begin{equation}\label{eq-equality}
B^{1}=
\begin{pmatrix}
0 & \mu & 0 & \cdots & 0\\
\mu & 0 & 0 & \cdots & 0\\
0  & 0 & 0 & \cdots & 0\\
\vdots & \vdots & \vdots & \ddots & \vdots\\
0  & 0 & 0 & \cdots & 0
\end{pmatrix},~~~~~~
B^{2}=
\begin{pmatrix}
\mu & 0 & 0 & \cdots & 0\\
0 & -\mu & 0 & \cdots & 0\\
0  & 0 & 0 & \cdots & 0\\
\vdots & \vdots & \vdots & \ddots & \vdots\\
0  & 0 & 0 & \cdots & 0
\end{pmatrix},
\end{equation}
and $ B^{\alpha}=0$ for all $\alpha\ge 3$.
By \eqref{equa7}, the norm of $B$ is constant and
$\mu=\sqrt{\frac{m-1}{4m}}$. Clearly the distribution $\mathbb{D}=span\{E_1,E_2\}$ is well-defined.

For convenience we adopt the convention
below on the range of indices:
\[
1\le i,j,k,l\le m,~~3\le a,b,c\le m,~~3\le \alpha,\beta,\gamma \le
p.
\]

Then our assumption means that except
\[
B^{1}_{12}=B^{2}_{21}=B^{2}_{11}=-B^{2}_{22}=\mu=\sqrt{\frac{m-1}{4m}},
\]
any other coefficient of the M\"obius second fundamental form vanishes. In particular,
\begin{equation}\label{eq-equality2}
B^{1}_{11}=B^1_{22}=B^{1}_{aj}=0,\hskip 5pt
B^2_{12}=B^{2}_{21}=B^{2}_{bj}=0,\hskip 5pt B^{\alpha}_{ij}=0.
\end{equation}

First we compute the covariant derivatives of $B^{r}_{ij}$. Denote
\begin{equation}\label{eq-theta}
\theta\triangleq 2\omega_{12}+\theta_{12}.
\end{equation}
Since the M\"obius second fundamental form $B$ has the form (\ref{eq-equality}), we have
\begin{equation}\label{bb1}
B^{\delta}_{ab,k}=0, 1\leq \delta\leq p, 1\leq k\leq m,~~B^{\alpha}_{1a,i}=0,  B^{\alpha}_{2a,i}=0.
\end{equation}

\begin{equation}\label{bb2}
\omega_{2a}=\sum_i\frac{B^1_{1a,i}}{\mu}\omega_i=-\sum_i\frac{B^2_{2a,i}}{\mu}\omega_i,~~ \omega_{1a}=\sum_i\frac{B^1_{2a,i}}{\mu}\omega_i=\sum_i\frac{B^2_{1a,i}}{\mu}\omega_i.
\end{equation}

\begin{equation}\label{bb3}
\begin{split}
&\theta=\sum_i\frac{-B^1_{11,i}}{\mu}\omega_i=\sum_i\frac{B^1_{22,i}}{\mu}\omega_i=\sum_i\frac{B^2_{12,i}}{\mu}\omega_i,\\
&B^1_{12,i}=0,~~B^2_{11,i}=B^2_{22,i}=0.
\end{split}
\end{equation}

It follows from (3)
that
$$ C^{\alpha}_1=B^{\alpha}_{aa,1}-B^{\alpha}_{a1,a}=0;
C^{\alpha}_2=B^{\alpha}_{aa,2}-B^{\alpha}_{a2,a}=0.$$
$$ C^{\alpha}_a=B^{\alpha}_{11,a}-B^{\alpha}_{1a,1}=B^{\alpha}_{11,a},C^{\alpha}_a=B^{\alpha}_{22,a}-B^{\alpha}_{2a,2}=B^{\alpha}_{22,a},$$
Since $\sum_iB^{\delta}_{ii,k}=0, 1 \leq \delta \leq p,1\leq k\leq m $, we have
$$C^{\alpha}=0.$$
From (\ref{bb2}) and (\ref{bb3}), we obtain
$$B^1_{2a,2}=B^1_{22,a}=B^2_{1a,2},~~B^2_{1a,1}=0,B^2_{2a,2}=0.$$
This implies that $C^1_a=0,~~C^2_a=B^2_{11,a}=B^2_{22,a}$, thus $C^2_a=0.$

The other coefficients of $\{C^{r}_j\}$ are obtained similarly as
below:
\begin{equation}\label{cc1}
\begin{split}
C^{1}_1=-B^{1}_{1a,a}=-\mu\omega_{2a}(e_a),~~C^{2}_2=-B^{2}_{2a,a}=\mu\omega_{2a}(e_a), \\
C^{1}_2=-B^{1}_{2a,a}=-\mu\omega_{1a}(e_a), ~~C^{2}_1=-B^{2}_{1a,a}=-\mu\omega_{1a}(e_a).
\end{split}
\end{equation}
In particular we have
\begin{equation}
C^{1}_1=-C^{2}_2,~~C^{1}_2=C^{2}_1.
\end{equation}
The covariant derivative of coefficients of $C$ are
\begin{equation}\label{c12}
C^1_{1,i}=-C^2_{2,i}, ~~C^1_{2,i}=C^2_{1,i},~~C^{\alpha}_{a,i}=0.
\end{equation}

From (\ref{bb2}) and (\ref{bb3}), we write out the connection forms
\begin{equation}\label{conne}
\begin{split}
&\theta=\sum_i\frac{-B^1_{11,i}}{\mu}\omega_i=\sum_i\frac{B^1_{22,i}}{\mu}\omega_i=\sum_i\frac{B^2_{12,i}}{\mu}\omega_i,\\
&\omega_{1a}=\frac{B^1_{2a,2}}{\mu}\omega_2+\frac{B^1_{2a,a}}{\mu}\omega_a=\frac{B^2_{1a,2}}{\mu}\omega_2+\frac{B^2_{1a,a}}{\mu}\omega_a,\\
&\omega_{2a}=\frac{B^1_{1a,1}}{\mu}\omega_1+\frac{B^1_{1a,a}}{\mu}\omega_a=\frac{-B^2_{2a,1}}{\mu}\omega_1-\frac{B^2_{2a,a}}{\mu}\omega_a.
\end{split}
\end{equation}
Now we use the assumption that
the distribution $\mathbb{D}=span\{E_1,E_2\}$ is integrable, which says
$$d\omega_a\equiv0, mod\{\omega_a\}.$$
From (\ref{conne}), we obtain
\begin{equation*}
B^1_{11,a}=-B^1_{22,a}=-B^2_{12,a}=0.
\end{equation*}

\begin{equation}\label{conne1}
\theta=\frac{C^1_1}{\mu}\omega_1-\frac{C^1_2}{\mu}\omega_2,
\omega_{1a}=\frac{-C^1_2}{\mu}\omega_a, ~~\omega_{2a}=\frac{-C^1_1}{\mu}\omega_a.
\end{equation}
\begin{equation}\label{curv6}
\begin{split}
-\frac{1}{2}\sum_{ij}R_{1aij}\omega_i\wedge\omega_j=-\sum_i\frac{C^1_{2,i}}{\mu}\omega_i\wedge\omega_a+\frac{(C^1_1)^2+(C^1_2)^2}{\mu^2}\omega_1\wedge\omega_a,\\
-\frac{1}{2}\sum_{ij}R_{2aij}\omega_i\wedge\omega_j=-\sum_i\frac{C^1_{1,i}}{\mu}\omega_i\wedge\omega_a+\frac{(C^1_1)^2+(C^1_2)^2}{\mu^2}\omega_2\wedge\omega_a,\\
-\sum_{ij}(R_{12ij}+\frac{1}{2}R^{\perp}_{12ij})\omega_i\wedge\omega_j=\sum_i\frac{C^1_{1,i}}{\mu}\omega_i\wedge\omega_1-\sum_i\frac{C^1_{2,i}}{\mu}\omega_i\wedge\omega_2\\
+\left[\frac{(C^1_1)^2+(C^1_2)^2}{\mu^2}
+\sum_{\alpha}\frac{(B^{\alpha}_{11,2})^2
+(B^{\alpha}_{22,1})^2}{\mu^2}\right]\omega_1\wedge\omega_2,
\end{split}
\end{equation}

From (\ref{curv6}), we obtain
\begin{equation}\label{abc}
\begin{split}
&R_{1a1a}=A_{11}+A_{aa}=\frac{C^1_{2,1}}{\mu}-\frac{(C^1_1)^2+(C^1_2)^2}{\mu^2},\\
&R_{2a2a}=A_{22}+A_{aa}=\frac{C^1_{1,2}}{\mu}-\frac{(C^1_1)^2+(C^1_2)^2}{\mu^2}\\
&R_{1a2a}=\frac{C^1_{2,2}}{\mu},~~R_{2a1a}=\frac{C^1_{1,1}}{\mu},~~R_{1a12}=A_{2a}=0,~~R_{2a12}=-A_{1a}=0,\\
&R_{121a}=A_{2a}=\frac{C^1_{1,a}}{\mu},~~R_{122a}=-A_{1a}=\frac{C^1_{2,a}}{\mu}.
\end{split}
\end{equation}
The equations (\ref{abc}) implies that
$$L:=A_{aa}=A_{bb}, A_{1a}=A_{2a}=A_{ab}=0, a\neq b.$$

Define new frame vectors
\begin{equation*}
\begin{split}
&\hat{Y}=\frac{-(C^1_1)^2-(C^1_2)^2}{2\mu^2}Y+N-\frac{C^1_1}{\mu}Y_2-\frac{C^1_2}{\mu}Y_1,\\
&\eta_1=Y_1+\frac{1}{\mu}C_2^1Y,~~\eta_2=Y_2+\frac{1}{\mu} C^1_1Y,~~K=2L+\frac{(C^1_1)^2+(C^1_2)^2}{\mu^2}.
\end{split}
\end{equation*}
Then we have the moving frame $\{Y,\hat{Y},\eta_1,\eta_2,Y_3,\cdots,Y_m,\xi_1,\xi_2,\xi_3,\cdots,\xi_p\}$, such that
$\mathbb{R}^{m+p+2}=span\{Y,\hat{Y}\}\bigoplus span\{\eta_1,\eta_2,Y_3,\cdots,Y_m,\xi_1,\xi_2,\xi_3,\cdots,\xi_p\}$, $\langle Y,\hat{Y}\rangle =1$ and
$\{\eta_1,\eta_2,Y_3,\cdots,Y_m,\xi_1,\xi_2,\xi_3,\cdots,\xi_p\}$ are orthonormal vector fields.

Using (\ref{abc}) and (\ref{curv6}), we have
\begin{equation}\label{frame1}
\begin{split}
d\xi_1=-\mu\omega_1\eta_2-\mu\omega_2\eta_1+\sum_{s=1}^p\theta_{1s}\xi_s,\\
d\xi_2=-\mu\omega_1\eta_1+\mu\omega_2\eta_2+\sum_{s=1}^p\theta_{2s}\xi_s,\\
d\xi_{\alpha}=-\theta_{1\alpha}\xi_1-\theta_{2\alpha}\xi_2+\sum_{\beta}\theta_{\alpha\beta}\xi_{\beta}.
\end{split}
\end{equation}
\begin{equation}\label{frame2}
\begin{split}
&d\eta_1=\left[\omega_{12}-\frac{C^1_1}{\mu}\omega_1
+\frac{C^1_2}{\mu}\omega_2\right]\eta_2
+\omega_1\left(\frac{K}{2}Y-\hat{Y}\right)
+\mu\omega_2\xi_1+\mu\omega_1\xi_2,\\
&d\eta_2=-\left[\omega_{12}-\frac{C^1_1}{\mu}\omega_1
+\frac{C^1_2}{\mu}\omega_2\right]\eta_1
+\omega_2\left(\frac{K}{2}Y-\hat{Y}\right)
+\mu\omega_1\xi_1-\mu\omega_2\xi_2,\\
&d\left(\frac{K}{2}Y-\hat{Y}\right)=K[\omega_1\eta_1+\omega_2\eta_2]
+\left[\frac{C^1_2}{\mu}\omega_1+\frac{C^1_1}{\mu}\omega_2\right]
\left(\frac{K}{2}Y-\hat{Y}\right).
\end{split}
\end{equation}
\begin{equation}\label{k}
E_1(K)=2\frac{C^1_2}{\mu}K,~~
E_2(K)=2\frac{C^1_1}{\mu}K,~~
E_a(K)=0.
\end{equation}
From (\ref{frame1}) and (\ref{frame2}), we know that the subspace
$$V=span\{(\frac{K}{2}Y-\hat{Y}),\eta_1,\eta_2,\xi_1,\xi_2,\cdots,\xi_p\}$$
is parallel along $M^m$. The orthogonal complement $V^{\perp}$ also is parallel along $M^m$. In fact,
$$V^{\perp}=span\{(\frac{K}{2}Y+\hat{Y}),Y_3,\cdots,Y_m\}.$$
 Using (\ref{abc}) and (\ref{curv6}), we can obtain
\begin{equation}\label{t}
d(\frac{K}{2}Y+\hat{Y})=\left(\frac{C^1_2}{\mu}\omega_1
+\frac{C^1_1}{\mu}\omega_2\right)(\frac{K}{2}Y+\hat{Y})
+K\sum_a\omega_aY_a.
\end{equation}

Clearly, the distribution $\mathbb{D}^{\perp}=span\{E_3,\cdots,E_m\}$ also is integrable. From (\ref{frame1}) and  (\ref{frame2}),
we know that the mean curvature spheres $\xi_1,\xi_2$ induce $2$-dimensional submanifolds in the de sitter space $\mathbb{S}_1^{m+p+1}$
$$\xi_1,\xi_2:M^2=M^m/F\longrightarrow \mathbb{S}_1^{m+p+1},$$
where fibers $F$ are integral submanifolds of distribution $\mathbb{D}^{\perp}$. In other words, $\xi_1,\xi_2$ form $2$-parameter family of $(m+p-1)$-spheres enveloped
by $f:M^m\longrightarrow \mathbb{R}^{m+p}$.

Since $\langle\frac{K}{2}Y-\hat{Y},\frac{K}{2}Y-\hat{Y}\rangle =-\langle\frac{K}{2}Y+\hat{Y},\frac{K}{2}Y+\hat{Y}\rangle =-K$ satisfies a linear first-order PDE (\ref{k}), we see that $K\equiv 0$ or $K\neq 0$ on the connected open set of $M^m$. Thus there are three possibilities for the induced metric on the fixed subspace $V,~V^{\perp}\subset \mathbb{R}^{m+p+2}_1$.

\vspace{1mm}
\noindent{\bf Case 1}. $K<0$ on $M^m$; $V$ is a fixed space-like subspace, $V^{\bot}$ is a fixed Lorentz subspace in $\mathbb{R}^{m+p+2}_1$.
We can assume that $V=\mathbb{R}^{3+p}, ~~V^{\bot}=\mathbb{R}^{m-1}_1$. From (\ref{frame1}), (\ref{frame2}) and (\ref{k}), we know
$$u=\frac{1}{\sqrt{-K}}(\frac{K}{2}Y-\hat{Y}):M^2\to \mathbb{S}^{2+p}.$$
On the other hand, the equation (\ref{t}) implies that
$$\phi=\frac{1}{\sqrt{-K}}(\frac{K}{2}Y+\hat{Y}):\mathbb{H}^{m-2}\to \mathbb{R}^{m-1}_1$$
is the embedding of the hyperbolic space $\mathbb{H}^{m-2}$ in $\mathbb{R}^{m-1}_1$.
Then
\begin{equation*}
Y=2\sqrt{-K}(u,\phi):M^2\times \mathbb{H}^{m-2}\to \mathbb{S}^{2+p}\times \mathbb{H}^{m-2}\subset \mathbb{R}^{m+p+2}_1,
\end{equation*}
where $2\sqrt{-K}\in C^{\infty}(M^2)$ and $\phi:\mathbb{H}^{m-2}\to \mathbb{H}^{m-2}$ is a identity map.
From Proposition (\ref{redu}), we know that $f$ is a cone over $u:M^2\to \mathbb{S}^{2+p}.$

\vspace{1mm}
\noindent{\bf Case 2}. $K>0$ on $M^m$; $V$ is a fixed Lorentz subspace, $V^{\bot}$ is a fixed space-like subspace in $\mathbb{R}^{m+p+2}_1$.
We can assume that $V=\mathbb{R}^{3+p}_1, ~~V^{\bot}=\mathbb{R}^{m-1}$. From (\ref{frame1}), (\ref{frame2}) and (\ref{k}), we know
$$u=\frac{1}{\sqrt{K}}(\frac{K}{2}Y-\hat{Y}):M^2\to \mathbb{H}^{2+p}.$$
On the other hand, the equation (\ref{t}) implies that
$$\phi=\frac{1}{\sqrt{K}}(\frac{K}{2}Y+\hat{Y}):\mathbb{S}^{m-2}\to \mathbb{R}^{m-1}$$
is the embedding of the sphere $\mathbb{S}^{m-2}$ in $\mathbb{R}^{m-1}$.
Then
\begin{equation*}
Y=2\sqrt{K}(u,\phi):M^2\times \mathbb{S}^{m-2}\to \mathbb{H}^{2+p}\times \mathbb{S}^{m-2}\subset \mathbb{R}^{m+p+2}_1,
\end{equation*}
where $2\sqrt{K}\in C^{\infty}(M^2)$ and $\phi:\mathbb{S}^{m-2}\to \mathbb{S}^{m-2}$ is the identity map.
From Proposition (\ref{redu}), we know that $f$ is the rotational submanifold over $u:M^2\to \mathbb{H}^{2+p}.$

\vspace{1mm}
\noindent{\bf Case 3}. $K=0$ on $M^m$. From (\ref{t}), we can assume that $\hat{Y}=e^{\varrho}(-1,1,0,\cdots,0)$, where $\varrho\in C^{\infty}(M^2)$.
On the other hand, $V,~~ V^{\perp}$ are two fixed spaces endowed with a degenerate inner product. we can assume
that $V=span\{(\frac{K}{2}Y-\hat{Y}),\eta_1,\eta_2,\xi_1,\xi_2,\cdots,\xi_p\}=\mathbb{R}^{3+p}_0,~~V^{\bot}=R^{m-1}_0.$
We write vector $v\in \mathbb{R}^{3+p}_0$ and $w\in \mathbb{R}^{m-1}_0$ by
$$u=(u_0,-u_0,u_1,\cdots,u_{p+2},0,\cdots,0),~w=(w_0,-w_0,0,\cdots,0,w_1,\cdots,w_{m-2});$$
and we write
$$e^{\sigma}Y=\left(\frac{1+|f|^2}{2},\frac{1-|f|^2}{2},f\right), ~~f=(u_1,\cdots,u_{p+2},w_1,\cdots,w_{m-2})\in \mathbb{R}^{m+p}.$$
From (\ref{frame1}) and (\ref{frame2}), we know that
$$u=(u_1,\cdots,u_{p+2}):M^2\to \mathbb{R}^{2+p}$$
is an immersed surface, and
$$w=(w_1,\cdots,w_{m-2}):\mathbb{R}^{m-2}\to \mathbb{R}^{m-2}$$
is the identity map.
From Proposition (\ref{redu}), we know that $f$ is the cylinder over $u:M^2\to \mathbb{R}^{2+p}.$

Combining Proposition \ref{31}, \ref{32} and \ref{33}, we complete the proof to Theorem~\ref{the1}.\qed

\begin{Remark}
From (\ref{frame1}) and (\ref{frame2}), we obtain
\begin{equation}\label{hat0}
\begin{split}
&dY=-(\frac{C^1_2}{\mu}\omega_1+\frac{C^1_1}{\mu}\omega_2)Y+\omega_1\eta_1+\omega_2\eta_2+\sum_a\omega_aY_a,\\
&d\hat{Y}=(\frac{C^1_2}{\mu}\omega_1+\frac{C^1_1}{\mu}\omega_2)\hat{Y}+\frac{K}{2}\sum_a\omega_aY_a
-\frac{K}{2}(\omega_1\eta_1+\omega_2\eta_2).
\end{split}
\end{equation}
Thus we have
\begin{equation}\label{hat}
\langle d\hat{Y},d\hat{Y}\rangle =\frac{K^2}{4}\langle dY,dY\rangle =\frac{K^2}{4}g.
\end{equation}
Let $\hat{f}:M^m\to R^{m+p}$ be an immersed submanifold such that the M\"{o}bius position vector is $\hat{Y}$.
From $\langle \hat{Y},\xi_1\rangle =\cdots=\langle \hat{Y},\xi_p\rangle =0$ and (\ref{hat0}), we know that the submanifold  $\hat{f}:M^m\to R^{m+p}$
envelops the mean curvature spheres $\{\xi_1,\cdots,\xi_p\}$. And if $\hat{f}$ is an immersed submanifold, then
$\hat{f}$ also is a Wintgen ideal submanifold and is conformal to $f$. This is analogous to the duality phenomenon for Willmore surfaces in $\mathbb{S}^3$, but simpler than that. Here $\hat{f}$ either differ from $f$ by an antipodal map of the sphere in Case~1, or by an inversion/reflection with respect to the boundary at infinity of the hyperbolic space in Case~2, or degenerate to the single point at
infinity of the Euclidean space in Case~3.
\end{Remark}

{\bf Acknowledgements:} The authors thank Dr. Yuquan Xie for helpful discussions.

\vspace{5mm} \noindent Tongzhu Li,
{\small\it Department of Mathematics, Beijing Institute of
Technology, Beijing 100081, People's Republic of China.
e-mail:{\sf litz@bit.edu.cn}}

\vspace{5mm} \noindent Xiang Ma,
{\small\it School of Mathematical Sciences, Peking University,
Beijing 100871, People's Republic of China.
e-mail: {\sf maxiang@math.pku.edu.cn}}

\vspace{5mm} \noindent Changping Wang,
{\small\it School of Mathematics and Computer Science,
Fujian Normal University, Fuzhou 350108, People's Republic of China.
e-mail: {\sf cpwang@fjnu.edu.cn}}

\end{document}